\documentclass[pdflatex,sn-mathphys-num]{sn-jnl}

\usepackage{graphicx}
\usepackage{multirow}
\usepackage{amsmath,amssymb,amsfonts}
\usepackage{amsthm}
\usepackage{mathrsfs}
\usepackage[title]{appendix}
\usepackage{xcolor}
\usepackage{manyfoot}
\usepackage{booktabs}
\usepackage{algorithm}
\usepackage{algorithmicx}
\usepackage{algpseudocode}
\usepackage{listings}
\usepackage{bm}
\usepackage{bbm}
\usepackage[Symbol]{upgreek}

\usepackage{caption}
\usepackage{subcaption}
\usepackage[T1]{fontenc}
\usepackage{needspace}
\usepackage{hyperref}
\usepackage{placeins}

\theoremstyle{thmstyleone}
\newtheorem{theorem}{Theorem}
 
\newtheorem{proposition}{Proposition}

\newtheorem{remark}{Remark}

\newtheorem{corollary}{Corollary}

\theoremstyle{thmstylethree}

\newcommand{\be}{\begin{equation}}
\newcommand{\ee}{\end{equation}}

\begin{document}

\title[Properties of Lebesgue function]{Geometric properties of the Lebesgue function}


\author[1]{\fnm{Leoakdia} \sur{Białas-Cież} }\email{leokadia.bialas-ciez@uj.edu.pl}
\author*[2]{\fnm{Stefano} \sur{De Marchi}}\email{stefano.demarchi@unipd.it}
\author[1]{\fnm{Mateusz} \sur{Suder}}\email{suder.mateusz00@gmail.com}
\affil*[1]{\orgdiv{Faculty of Mathematics and Computer Science}, \orgname{Jagiellonian University}, \orgaddress{\city{Krak\' ow}, \country{Poland}}}

\affil[2]{\orgdiv{Department of Medicine}, \orgname{University of Padova}, \orgaddress{\city{Padova},  \country{Italy}}}


\abstract{
We present a collection of observations concerning the peculiar behavior of the Lebesgue function in the setting of the interval $[-1,1]\subset \mathbb{R}$ and the square $[-1,1]^2\subset \mathbb{R}^2$. We provide numerical results and formulate
several open problems related to the geometry of the Lebesgue function.
}

\keywords{Lebesgue function, interpolation nodes, Lebesgue constant, convexity, Padua points, Morrow-Patterson points.}

\maketitle

\section{Introduction}

Let $N_n$  be the dimension of the space $\mathcal{P}_n(\mathbb{R}^N)$ of polynomials of $N$ variables and of degree at most $n$, i.e. $N_n=\binom{N+n}n$. For a compact set $E\subset \mathbb{R}^N$ with nonempty interior consider a family of points $\mathcal{X}_n=\{x_1,...,x_{N_n}\}\subset E$ 
that form a unisolvent set for $\mathcal{P}_n(\mathbb{R}^N)$, i.e. the determinant of the Vandermonde matrix 
$[x_j^\alpha]_{j=1,...,N_n, \, |\alpha|\le n}$ at these points does not vanish. As usual, $z^\alpha=z_1^{\alpha_1}...\,z_N^{\alpha_N}$ for $z\in \mathbb{R}^N$ and multiindex $\alpha$.
Let $\mathcal{C}(E)$ be the set of continuous functions on $E$. For any $f\in \mathcal{E}$, let $L_n f$ be the interpolation polynomial of $f$ related to points from $\mathcal{X}_n$ given in the following form
\[ L_n f(x) \!=\!\sum_{j=1}^{N_n} f(x_j) \, \ell_j(x), \ \ \ \ \ell_j(x)\!=\!\ell_{x_j}^{\mathcal{X}_n}(x)\!=\!\frac{V(x_1,...,x_{j-1},x,x_{j+1},...,x_{N_n})}{V(x_1,...,x_{N_n})}, \ \ x\!\in\! \mathbb{R}^N\!,\]
where $V(t_1,...,t_{N_n})$ is the determinant of the Vandermonde matrix $[t_j^\alpha]_{j=1,...,N_n, \, |\alpha|\le n}$ and $\ell_j$ is called the \textit{$j$th fundamental Lagrange interpolation polynomial}. The \textit{Lebesgue function} is defined by
\begin{equation}\label{eq:lebfun} \lambda_n(x)= \lambda_n^{\mathcal{X}_n}(x):=\sum_{j=1}^{N_n} \left| \ell_j(x) \right|\end{equation}
and the \textit{Lebesgue constant} is its sup norm on $E$, i.e., 
\begin{equation} \label{eq:lebcons}
\Lambda_n=\Lambda_n(E,\mathcal{X}_n):=\max\{\lambda_n(x)\: : \: x\in E\} =\|\lambda_n\|_E.
\end{equation}
The Lebesgue function \eqref{eq:lebfun} and the Lebesgue constant \eqref{eq:lebcons} play a significant role in interpolation and approximation theory. Indeed, it is well known that $\Lambda_n$ is equal to the norm of the interpolation projection $L_n:\mathcal{C}(E) \ni f \mapsto L_nf \in \mathcal{P}_n(\mathbb{R}^N)$. Moreover, the Lebesgue inequality provides a bound on the interpolation error in terms of $\Lambda_n$. Additionally, the Lebesgue function $\lambda_n$ controls the stability of the operator $L_n$, due to the following estimate
$$ |L_n f(x) - L_n g(x)| \ \le  \ \lambda_n(x) \ \|f-g||_{\mathcal{X}_n}  $$
for any $x\in E$ and functions $f,g\in\mathcal{C}(E)$. Finally, the Lebesgue constant represents the conditioning of the interpolation problem, as
\begin{equation}
\|p_n-\tilde{p}_n\|_E \le \Lambda_n \|u-\tilde{u}\|_{X_n}\,,
\end{equation}
where $u=(u_1, \ldots, u_{N_n}), \; u_i=p_n(x_i)$, $p_n \in \mathcal{P}_n$, while $\tilde{u}, \; \tilde{u}_i=\tilde{p}_n(x_i)$ is a perturbation of $u$ and $\tilde{p}_n$ the corresponding interpolating polynomial.
\vskip 2mm 
\noindent

\vskip 2mm 
\noindent
The paper is organized as follows. In Section $\textit{2}$ we consider problems concerning the geometry of the Lebesgue function for interpolation sets on the interval.
\vskip 2mm 
\noindent
In Subsection $\textit{2.1.}$ we study the problem of the maximal points of the Lebesgue function for interpolation nodes on the interval. We briefly review several known cases in which the set of maximal points is explicitly known. We provide a general bound on the separation of interpolation nodes from the endpoints of the interval under which the maximum of the Lebesgue function does not occur at the boundary.
\vskip 2mm 
\noindent
Subsection $\textit{2.2.}$ focuses on determining the intervals of convexity of the Lebesgue function for interpolation nodes on the interval. We discuss several general observations and present numerical experiments for two specific sets of interpolation nodes, namely the Chebyshev points of the first and second kinds.
\vskip 2mm 
\noindent
In Section $\textit{3.}$ we consider the problem of local maxima of the two-dimensional Lebesgue function for the Padua points and the Morrow-Patterson points and discuss the geometry of the corresponding surface of the Lebesgue function. 
Based on numerical experiments, we give the lower bound for the number of local maxima and present examples for which the Lebesgue function may posses a larger number of local maxima.

\vskip 2mm
\noindent
Section \textit{4.} is devoted to concluding remarks and open problems. We formulate three questions for further research. They concern the distribution of interpolation nodes with regards to the location of global maxima of the Lebesgue function, and its convexity and concavity properties. In addition, we ask about the number of local maxima of the Lebesgue function on the square.

\section{One-dimensional case}

Let $I:=[-1,1]$ and $\mathcal{X}_n$ be a family  of strictly increasing nodes in $I$: $-1\le x_1<...<x_{n+1}\le 1$.  It is well known that in this case the interpolation fundamental Lagrange polynomials can be written in the form
$$ \ell_j(x)= \prod_{k=1,\, k\ne j}^{n+1} \frac{x-x_j}{x_k-x_j}, \ \ \ j=1,...,n+1, \ x\in I. $$ 
Consider the Lebesgue function $\lambda_n=\sum_{j=1}^{n+1} |\ell_j|$ related to $\mathcal{X}_n$. The minimal value of $\lambda_n$ in $I$ is equal to 1 and is attained only at points of $\mathcal{X}_n$.

\subsection{Maximum points of the Lebesgue function}

If $x_{n+1}<1$ then $|\ell_j|$ for every $j$ is strictly increasing and convex in $(x_{n+1},1)$ and thus the Lebesgue function shares the same properties there. Similarly, $\lambda_n$ is strictly decreasing and convex in $[-1,x_1]$ for $x_1>-1$. 

\vskip 2mm
\noindent 
Moreover, the Lebesgue function is piecewise polynomial. To be more precise, consider an interval $(x_k,x_{k+1})\subset I$ for $k=1,...,n$. 
Then there exists a polynomial $p_k\in\mathcal{P}_n$ such that $p_k=\lambda_n$ in $[x_k,x_{k+1}]$. 
By analyzing the zeros of the derivative of $p_k$, one can show that $\lambda_n$ attains its maximum on $[x_k,x_{k+1}]$ at exactly one point, see \cite{LR65}. Thus, at the node $x_k$, the Lebesgue function takes the value 1, then it increases until a local maximum, and afterward it decreases back to 1 at $x_{k+1}$.

\vskip 2mm
\noindent

We are interested in such points in $I$ where the Lebesgue function  $\lambda_n$ attains its maximum on $I$. 
Let 
\[ A=A(\mathcal{X}_n):=\{x\in I\: : \: \lambda_n(x)=\Lambda_n\}. \] Observe that $A$ is a discrete set and its cardinality does not exceed $n+2$. Moreover, $\mathcal{X}_n \cap A=\emptyset$, $n\ge 1$. 
The structure of the set $A$ seems very difficult to study. Even a slight perturbation of the nodes may completely disrupt it, see Fig. \ref{Cheb_small_perturbation}. The classical cases, now fairly well understood, required extensive and laborious investigation.

\begin{figure}[htbp]
     \begin{subfigure}[t]{0.47\textwidth}
        \includegraphics[width=\textwidth]{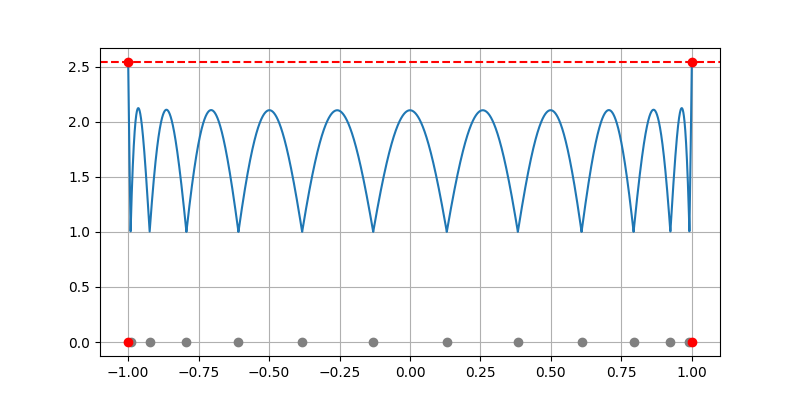}
        \caption{Lebesgue function for 10 Chebyshev points. Gray points indicate Chebyshev nodes.}
    \end{subfigure}
    \hspace{.5cm}
    \begin{subfigure}[t]{0.47\textwidth}
        \includegraphics[width=\textwidth]{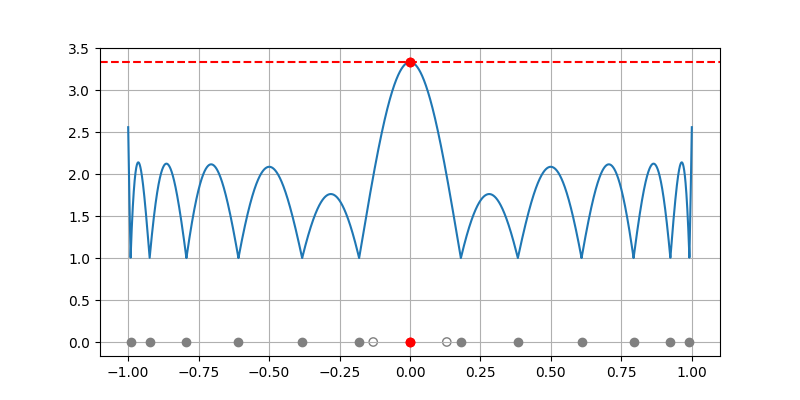}
        \caption{Lebesgue function for 10 Chebyshev nodes, where two central nodes were moved outward by 0.05. Hollow gray points are the original positions of the moved nodes.}
    \end{subfigure}
    
     \begin{subfigure}[t]{0.47\textwidth}
        \includegraphics[width=\textwidth]{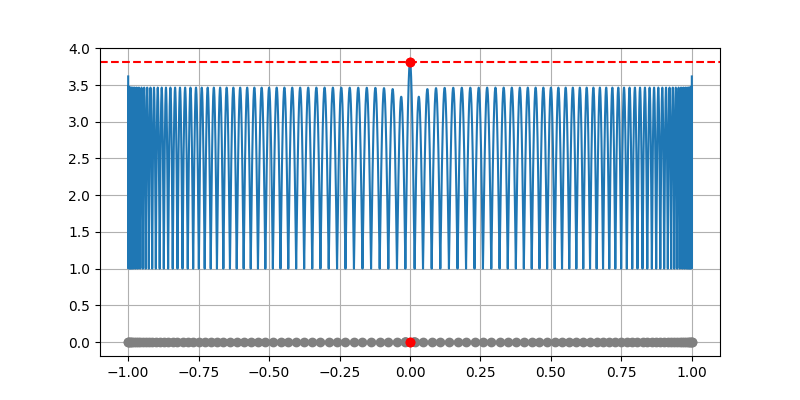}
        \caption{Lebesgue function of $100$ Chebyshev points with two central points moved outward by $0.001$.}
    \end{subfigure}
    \hspace{.5cm}
    \begin{subfigure}[t]{0.47\textwidth}
        \includegraphics[width=\textwidth]{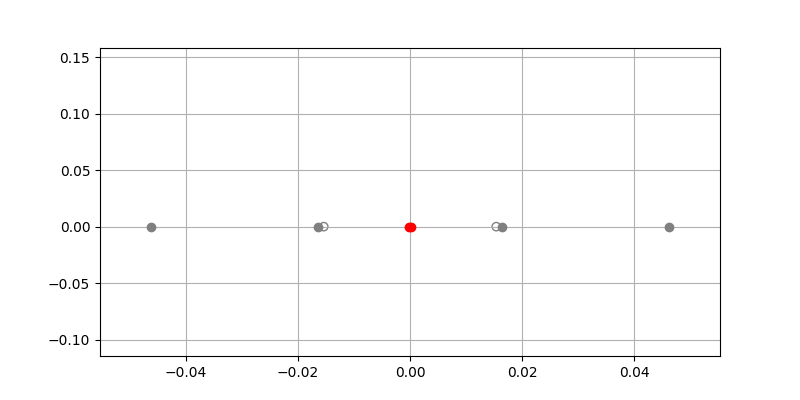}
        \caption{Close-up view of the interpolation nodes from (c). Hollow gray points are the original positions of the moved nodes.}
    \end{subfigure}
    \caption{Lebesgue function of the Chebyshev nodes of the first kind and the effect of a small perturbation of the nodes.}
    \label{Cheb_small_perturbation}
\end{figure}

\vskip 2mm
\noindent
For the equidistant nodes $x_j=-1+\tfrac{2(j-1)}{n}, \ j=1,...,n+1$, the set $A$ consists of two symmetric points with respect to zero: one of them is contained in $(x_1,x_2)$, and the other in its symmetric counterpart $(x_n,x_{n+1})=(-x_2,-x_1)$. Moreover, $\Lambda_n\sim \tfrac{2^{n+1}}{en\, \log n}$, as $n\rightarrow \infty$. 

\vskip 2mm
\noindent
The Lebesgue function of the Chebyshev nodes (of the first kind) $x_j=-\cos \tfrac{(2j-1)\pi}{2n+2}$, $j=1,...,n+1$, attains its maximal value precisely at -1 and 1, so $A=\{-1,1\}$, as was shown in 1965 by Powell. A classical Bernstein result states that $\Lambda_n\sim \tfrac{2}{\pi}\log (n+1)$, $n\rightarrow \infty$. 

\vskip 2mm
\noindent
The set $A$ for the Chebyshev-Lobatto nodes consisting of extremal points of the Chebyshev polynomial of degree $n$, i.e. $x_j=-\cos \tfrac{(j-1)\pi}{n}$, $j=1,...,n+1$, depends on the parity of $n$. Specifically, if $n$ is odd, then $A=\{0\}$, and $A$ contains two symmetric points close to zero if $n$ is even. However, numerical experiments show that $-1\in A$ after removing this point from the set of nodes. The Lebesgue constant for the Chebyshev–Lobatto nodes has the same asymptotic growth as that for the Chebyshev points.

\vskip 2mm
\noindent
The Chebyshev nodes of the second kind: $x_j=-\cos \tfrac{j\pi}{n+2}$, $j=1,...,n+1$ lie in the interior of $I$ and $A=\{-1,1\}$. For these points, the exact value of the Lebesgue constant is known to be $\Lambda_n=n+1$.

\vskip 2mm
\noindent
The extended Chebyshev nodes $x_j=-\cos \tfrac{(2j-1)\pi}{2n+2} \cdot (\cos \tfrac\pi{2n+2})^{-1}$, $j=1,...,n+1$ appear to have a nearly optimal Lebesgue constant \cite{Ibrahimoglu2016}. Namely, their $\Lambda_n$ satisfies inequalities (\ref{optimal}), see below. The endpoints of $I$ belong to this family of nodes, and the set $A$ contains one or two points at or near zero (respectively), depending on the parity of $n$.

\vskip 2mm
\noindent
Bernstein already observed that there exists a set of nodes $\mathcal{X}_n^\star\subset I$ satisfying the optimality condition: $$\min_{\mathcal{X}_n\subset I} \Lambda_n(I,\mathcal{X}_n) = \Lambda_n(I,\mathcal{X}_n^\star)=: \Lambda_n^\star.$$ 
In 1978 Kilgore \cite{K78} and de Boor, Pinkus \cite{BP78} proved that if $-1,1\in \mathcal{X}_n$ and the cardinality of $A(\mathcal{X}_n)$ is maximal, i.e. card$\, A(\mathcal{X}_n)=n$, then $\mathcal{X}_n=\mathcal{X}_n^\star$. This result resolves the long-standing Bernstein-Erd\"os conjecture. A nice estimate for $\mathcal{X}_n^\star$ was proved by Brutman in \cite{B78}
\be \label{optimal}
0.5212 +\tfrac2\pi \log(n+1) < \Lambda_n^\star < 0.75 +\tfrac2\pi \log(n+1)\ \ \ \ \mbox{for all} \ n=1,2,...
\ee
The number $0.5212$ 
is a close approximation of the quantity $\tfrac2\pi(\gamma+\log\tfrac4\pi)$, where $\gamma$
denotes the Euler constant. 

\vskip 2mm
\noindent
If necessary, taking a subinterval $I'\subset I$ and an affine transformation between $I'$ and $I$, we see that $\Lambda^\star_n\le \Lambda_n(I,\mathcal{X}_n)$ for any family of nodes $\mathcal{X}_n\subset I$. Hence the assumption $-1,1\in \mathcal{X}_n$ is not needed for the left-hand side inequality in (\ref{optimal}), and so  
\be \label{to use} 
0.5212 +\tfrac2\pi \log(n+1) \ < \ \Lambda_n(\mathcal{X}_n) \ \ \ \ \mbox{for all} \  n=1,2,... \ \ \mbox{and any } \mathcal{X}_n.
\ee
Very recently, Terence Tao proved in \cite{TT} that the estimate 
$$ \| \lambda_n \|_{[a,b]} > \tfrac2\pi \log (n+1) -\mathcal{O}(1) $$ 
holds for every $-1\le a<b\le 1$, 
thereby solving Erd\"os-Tur\'an Problem 1153 in \cite{Bloom}.
\vskip 2mm
\noindent
However, the right-hand inequality in (\ref{optimal}) regards optimal family of nodes $\mathcal{X}_n^*$ containing the endpoints of $I$. We can slightly rescale $\mathcal{X}_n^\star$ to obtain a new set of nodes $\mathcal{X}_n^{\star\star}$ contained in the interior of $I$, for which the same estimate still holds. But what does ‘slightly’ mean? In the following, we provide a necessary condition on a family of nodes $\mathcal{X}_n\subset (-1,1)$ ensuring that $A(\mathcal{X}_n)\subset(-1,1)$. In other words, we are interested in such nodes that their Lebesgue function attains its maximum in the interior of $I$.

\vskip 2mm

\begin{proposition}
    Let $$ a(n):=\log\left[2-\left(\tfrac2\pi \log(n+1) +0,5212\right) ^{-1}\right] \ \ \  \mbox{for} \ \ n>1.$$
    Then 
    $$ 1- \tfrac{a(n)}{n^2} \le x_{n+1} \ \implies \ 
     1\not\in A(\mathcal{X}_n), $$ \vskip 1mm
    $$ x_1\le -\left( 1- \tfrac{a(n)}{n^2} \right) \ \implies \ 
    -1\not\in A(\mathcal{X}_n). $$ 
    
\end{proposition}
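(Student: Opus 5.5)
The plan is to compare the value $\lambda_n(1)$ with the value $1$ taken at the node $x_{n+1}$, using a single polynomial that realizes $\lambda_n(1)$, and to control the growth of that polynomial on the short interval $[x_{n+1},1]$ by iterated Markov inequalities. The second implication follows from the first applied to the reflected nodes $-x_{n+1}<\dots<-x_1$, since $\lambda_n^{-\mathcal{X}_n}(x)=\lambda_n^{\mathcal{X}_n}(-x)$. So I only treat the first implication.

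Put $\varepsilon_j:=\operatorname{sgn}\ell_j(1)$ and $p:=\sum_{j=1}^{n+1}\varepsilon_j\ell_j\in\mathcal{P}_n$. Then $p(1)=\lambda_n(1)$, $|p(x_{n+1})|=1$, and $|p|\le\lambda_n\le\Lambda_n$ on $I$.

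\begin{enumerate}
\item Iterating the Markov inequality on $I$ gives, for every $x\in I$ and every $k\ge 1$,
\[ |p^{(k)}(x)|\le n^2(n-1)^2\cdots\le n^{2k}\,\Lambda_n. \]
\item Set $h:=1-x_{n+1}$, so that $0\le h\le a(n)/n^2$ by hypothesis. Since $p$ is a polynomial, Taylor's expansion at $x_{n+1}$ is exact and finite, so
\[ \lambda_n(1)=p(1)\le |p(x_{n+1})|+\sum_{k\ge1}\frac{|p^{(k)}(x_{n+1})|}{k!}h^k\le 1+\Lambda_n\bigl(e^{n^2h}-1\bigr)\le 1+\Lambda_n\bigl(e^{a(n)}-1\bigr). \]
\item By definition $e^{a(n)}=2-L_n^{-1}$, where $L_n:=\tfrac2\pi\log(n+1)+0.5212$. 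By (\ref{to use}) we have $\Lambda_n>L_n$, hence $e^{a(n)}<2-\Lambda_n^{-1}$. Substituting this into the previous step gives
\[ \lambda_n(1)<1+\Lambda_n\bigl(1-\Lambda_n^{-1}\bigr)=\Lambda_n, \]
so $1\notin A(\mathcal{X}_n)$.
\end{enumerate}

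Note that $a(n)>0$ because $L_n>1$ for $n>1$, so the hypothesis is meaningful.

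The only delicate step is the derivative bound in step 1. Markov's inequality is applied on all of $I$ to $p^{(k-1)}$, which has degree $n-k+1$ and sup norm at most $n^{2(k-1)}\Lambda_n$, and the resulting crude bound $n^{2k}$ suffices. The strictness of the final inequality comes entirely from the strict lower bound (\ref{to use}). That is why $a(n)$ is built from the constant $0.5212$: $e^{a}$ must be strictly below $2-\Lambda_n^{-1}$.
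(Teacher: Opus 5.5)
Your proposal is correct and follows essentially the paper's proof: the same sign-pattern polynomial $p$, Taylor expansion at the extreme node with iterated Markov bounds giving $\lambda_n(\pm1)\le 1+\Lambda_n(e^{a(n)}-1)$, and the strict lower bound on $\Lambda_n$ to close. The differences are cosmetic. You treat the right endpoint and reflect, while the paper treats the left one. You also obtain $|p|\le\lambda_n$ directly from the triangle inequality rather than through the paper's sign-constancy argument on subintervals, which is slightly cleaner.
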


\begin{proof}
    We restrict ourselves to $x_1$. The case $x_1=-1$ is obvious, so we may assume that $x_1>-1$.
    Consider such a polynomial $p\in\mathcal{P}_n$ that $\lambda_n = p$ on the interval $[-1,x_1]$.
By Taylor's theorem, it follows that 
$$ \lambda_n(-1)=p(-1)\le \sum_{j=0}^n \tfrac1{j!} |p^{(j)}(x_1)| \, (1+x_1)^j  =  |p(x_1)| + \sum_{j=1}^n \tfrac1{j!} |p^{(j)}(x_1)| \, (1+x_1)^j. $$ 
\noindent Since $p(x_1)=\lambda_n(x_1)=1$, using $j$-th iteration of the classical Markov inequality: $\|p'\|_I \le n^2 \|p\|_I$, we get 
\be \label{techn_1} \lambda_n(-1) \le 1 + \sum_{j=1}^n \tfrac1{j!} n^{2j} \|p\|_I \, (1+x_1)^j < 1+ \|p\|_I [\exp(n^2(1+x_1))-1]. \ee  \noindent
Let $c_j=\tfrac{|\ell_j(-1)|}{\ell_j(-1)}$ for $j=1,...,n+1$. Then for $x\in[-1,x_1]$ we have 
$$ \lambda_n(x) = \sum_{j=1}^{n+1} |\ell_j(x)| = \sum_{j=1}^{n+1} c_j\, \ell_j(x) = p(x). $$
For an arbitrary interval $(x_k,x_{k+1})$, $k\in\{1,...,n\}$, or $(x_{n+1},1)$ (if it is not empty), all Lagrange fundamental polynomials $\ell_j$ are of constant signs, and consequently, 
$$ \lambda_n = \sum_{j=1}^{n+1} |\ell_j| = \max_{\alpha_1,...,\alpha_{n+1}\in[-1,1]} \sum_{j=1}^{n+1} \alpha_j\,\ell_j \ge \sum_{j=1}^{n+1} c_j\,\ell_j = p. $$
Similarly, $\lambda_n\ge -p$, and thus $|p|\le \lambda_n $ in every interval $(x_k, x_{k+1})$, $(-1,x_1)$, $(x_{n+1},1)$, so $\|p\|_I\le \|\lambda_n\|_I = \Lambda_n$. Applying this estimate and the assumption of the proposition to (\ref{techn_1}), we obtain
$$ \lambda_n(-1) < 1+ \Lambda_n \, \left[\exp(a(n))-1\right] = \Lambda_n \, \left[ \tfrac1{\Lambda_n} + \exp(a(n))-1\right]. $$
From inequality (\ref{to use}) that holds for any family $\mathcal{X}_n$ and $n\ge 1$, we have
$$ \lambda_n(-1) < \Lambda_n \, \left[ \tfrac1{0.5212 +\tfrac2\pi \log(n+1)} + 2-\tfrac1{0.5212 +\tfrac2\pi \log(n+1)}-1\right] =\Lambda_n. $$   
\end{proof}

\noindent
Observe that the function $a=a(x)$ is strictly increasing and tends to $\log 2$ as $x\rightarrow \infty$. 
We can verify that $a(2)>0.166$, $a(9)>0.4$, $a(38)>0.5$ to obtain concrete examples. 
The inverse function of $a$ is 
$$ N(b):=\exp\left[ \tfrac\pi2 \left( \tfrac1{2-e^b} -0.5212\right)\right] -1, \ \ \ a(2)\le b< \log 2.  $$ 
Since the range of $a(n)$ for $n>1$ is $[a(2),\log 2)\approx [0.166,0,693] $
we can formulate the statement of the proposition in the following way.

\vskip 3mm 

\begin{theorem}
    For any $b\in [0.166,0.693]$ and $n\ge N(b)$ 
   $$  x_1 \le - \left(1-\tfrac{b}{n^2} \right) \ \implies \ \lambda_n(-1) <\Lambda_n,$$
   $$  1-\tfrac{b}{n^2} \le x_{n+1} \ \implies \ \lambda_n(1) <\Lambda_n.$$
   \vskip 2mm
   \noindent
In particular, $\lambda_n(1) <\Lambda_n$ in each of the example cases below \begin{itemize}
        \item $ 1 - \tfrac1{2n^2} \le x_{n+1} $ and $n>37$, \vskip 2mm
        \item $ 1 - \tfrac2{5n^2} \le x_{n+1} $ and $n>8$, \vskip 2mm
        \item $ 1 - \tfrac{0.166}{n^2} \le x_{n+1} $ and $n>1$.
    \end{itemize}
\end{theorem}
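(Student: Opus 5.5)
The plan is to deduce the theorem from the preceding Proposition by monotonicity of $a$, with no new analytic input. The function $a(x)=\log\bigl[2-(\tfrac2\pi\log(x+1)+0.5212)^{-1}\bigr]$ is strictly increasing on $[2,\infty)$: the inner quantity $\tfrac2\pi\log(x+1)+0.5212$ is increasing and positive, so its reciprocal decreases, and $2-(\cdot)^{-1}$ increases. Solving $a(x)=b$ for $x$ gives exactly $N(b)=\exp\bigl[\tfrac\pi2\bigl(\tfrac1{2-e^b}-0.5212\bigr)\bigr]-1$, the inverse of $a$.

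Hence $n\ge N(b)$ implies $a(n)\ge a(N(b))=b$, so $\tfrac{b}{n^2}\le\tfrac{a(n)}{n^2}$. Then $x_1\le-(1-\tfrac b{n^2})\le-(1-\tfrac{a(n)}{n^2})$, and the Proposition gives $-1\notin A(\mathcal{X}_n)$, i.e.\ $\lambda_n(-1)<\Lambda_n$. Note that the proof actually yields the strict inequality $\lambda_n(-1)<\Lambda_n$ directly, also when $x_1=-1$ and $n\ge1$, since then $\lambda_n(-1)=1<\Lambda_n$ by (\ref{to use}). The statement for $x_{n+1}$ follows the same way, or by the reflection $x\mapsto -x$, which maps nodes to nodes and preserves $\lambda_n$ up to this reflection.

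Two boundary issues need checking. First, for $b$ near $0.166$ one needs $N(b)\le 2$, or at least that $n\ge N(b)$ forces $n>1$; this holds since $a(2)>0.166$. Second, at $b=0.693\approx\log 2$ one needs $N(b)$ to be finite: because $0.693<\log 2$, $N(0.693)$ is finite, though huge. I expect the main obstacle to be only these numerical edge checks, not any conceptual step.

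For the listed examples I compute $N(1/2)$, $N(2/5)$ and $N(0.166)$ numerically. The text's claims $a(38)>0.5$ and $a(9)>0.4$ give, by monotonicity, $N(0.5)<38$ and $N(0.4)<9$. So $n>37$, i.e.\ $n\ge38$, suffices for $b=1/2$, and $n>8$ suffices for $b=2/5$. Similarly $a(2)>0.166$ handles $n>1$. Where the threshold matters I verify it with a direct evaluation, for example $\tfrac2\pi\log 39+0.5212\approx 2.853$, so $2-1/2.853\approx1.6495$, whose logarithm is about $0.5005>0.5$.
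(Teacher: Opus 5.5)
Your proposal is correct and follows the paper's own route: the Theorem is the Proposition rewritten through the strictly increasing function $a$ and its inverse $N$, with the example thresholds coming from the numerical checks $a(2)>0.166$, $a(9)>0.4$, $a(38)>0.5$. One small correction to an aside: the case $x_1=-1$ is automatic only for $n\ge 2$ (for $n=1$ the lower bound gives only $\Lambda_1>0.96$, and the nodes $\{-1,1\}$ give $\lambda_1\equiv 1$), but this does not affect the theorem, because $N(b)>1$ (as $a(1)<0$) already forces $n\ge 2$.
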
  
\vskip 1mm
\begin{remark}
    {\rm Note that Chebyshev nodes $\mathcal{X}_n$ of the first and of the second kind have extreme points at a distance of order $\mathcal{O}(\tfrac1{n^2})$ from the endpoints of the interval $I$, but the maximum value of their Lebesgue function is attained at -1 and 1. This can be clarified by examining the coefficient of $\tfrac1{n^2}$ for these classical nodes. Namely, for the Chebyshev nodes of the first kind, we have
    $$ 1+x_1=1-\cos\tfrac{\pi}{2n+2} \sim \tfrac{\pi^2}8 \, \tfrac1{n^2} \approx 1,2 \, \tfrac1{n^2},$$
    and for the second kind:
    $$ 1+x_1=1-\cos\tfrac{\pi}{n+2} \sim \tfrac{\pi^2}2 \, \tfrac1{n^2} \approx 4,9 \, \tfrac1{n^2}.$$
    The coefficients larger than $\log 2$ explain why in both cases the conclusion of Theorem~1 fails.
    }
\end{remark}
\vskip 3mm

\begin{corollary}
    Scaling the set $\mathcal{X}_n$ that contains the endpoints of $I$, by a constant $c_n\in [1-\tfrac{a(n)}{n^2},1)$, yields a new family of nodes $$\mathcal{X}'_n\subset[-c_n, c_n]\subset(-1,1)$$ such that 
$$\Lambda_n(I,\mathcal{X}'_n) = \Lambda_n(I,\mathcal{X}_n).$$ 
In particular, by means of extended Chebyshev points, we can obtain a concrete family of nodes separated from the endpoints of $I$, with a nearly optimal Lebesgue constant satisfying inequalities (\ref{optimal}).
\end{corollary}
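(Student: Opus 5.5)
The corollary follows from Proposition~1 applied to the scaled nodes, plus the fact that $\lambda_n$ is invariant under affine changes of variable. Let $\mathcal{X}_n=\{x_1<\dots<x_{n+1}\}$ with $x_1=-1$, $x_{n+1}=1$, and put $x'_j=c_nx_j$.

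\textbf{Step 1: relate the two Lebesgue functions.} Each $\ell_j$ is a ratio of products of differences of nodes and of $x$. Hence the fundamental polynomials of $\mathcal{X}'_n$ satisfy
\[
\ell'_j(x)=\ell_j(x/c_n),
\qquad\text{so}\qquad
\lambda_n^{\mathcal{X}'_n}(x)=\lambda_n^{\mathcal{X}_n}(x/c_n).
\]
As $x$ runs over $[-c_n,c_n]$, the point $x/c_n$ runs over $I$. Therefore
\[
\max_{[-c_n,c_n]}\lambda_n^{\mathcal{X}'_n}=\Lambda_n(I,\mathcal{X}_n).
\]

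\textbf{Step 2: exclude the outer intervals.} On $[c_n,1]$ and on $[-1,-c_n]$, $\lambda_n^{\mathcal{X}'_n}$ is strictly monotone, as noted in Subsection~2.1. So on $I$ its maximum is either the interior value $\Lambda_n(I,\mathcal{X}_n)$ or one of the endpoint values $\lambda_n^{\mathcal{X}'_n}(\pm1)$. The extreme scaled nodes satisfy
\[
x'_{n+1}=c_n\ge 1-\tfrac{a(n)}{n^2}
\qquad\text{and}\qquad
x'_1=-c_n\le-\bigl(1-\tfrac{a(n)}{n^2}\bigr),
\]
which are exactly the hypotheses of Proposition~1. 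The proposition gives $\pm1\notin A(\mathcal{X}'_n)$, i.e.
\[
\lambda_n^{\mathcal{X}'_n}(\pm1)<\Lambda_n(I,\mathcal{X}'_n).
\]
The maximum is therefore attained inside $[-c_n,c_n]$. By Step~1,
\[
\Lambda_n(I,\mathcal{X}'_n)=\Lambda_n(I,\mathcal{X}_n).
\]
The inclusions $\mathcal{X}'_n\subset[-c_n,c_n]\subset(-1,1)$ hold because $c_n<1$.

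\textbf{Step 3: the particular case.} Take $\mathcal{X}_n$ to be the extended Chebyshev nodes, which contain $\pm1$. Their Lebesgue constant satisfies (\ref{optimal}), as recalled earlier from \cite{Ibrahimoglu2016}. Step~2 transfers the same constant, hence the same inequalities, to the scaled family. This family lies in $[-c_n,c_n]$, at distance at least $1-c_n>0$ from $\pm1$.

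\textbf{Main point of care.} The only step needing attention is Step~2. One must check that Proposition~1, whose proof uses only $\|p\|_I\le\Lambda_n$ and inequality (\ref{to use}) for an arbitrary family, applies to the scaled family $\mathcal{X}'_n$ itself and not to $\mathcal{X}_n$. One also needs $a(n)>0$ for $n>1$, so that the interval for $c_n$ is nonempty.
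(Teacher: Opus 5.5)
Your argument is correct and is the one the paper leaves implicit; the paper states the corollary without a separate proof, as a direct consequence of Proposition~1. The identity $\lambda_n^{\mathcal{X}'_n}(x)=\lambda_n^{\mathcal{X}_n}(x/c_n)$ gives $\max_{[-c_n,c_n]}\lambda_n^{\mathcal{X}'_n}=\Lambda_n(I,\mathcal{X}_n)$, and Proposition~1 applied to the scaled family $\mathcal{X}'_n$, together with the strict monotonicity of $\lambda_n^{\mathcal{X}'_n}$ on $[c_n,1]$ and $[-1,-c_n]$, excludes the outer intervals.
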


\vskip 5mm

\subsection{Piecewise concavity and convexity of the Lebesgue function}

In 1974 Neuman claimed that the Lebesgue function $\lambda_n$ is strictly concave in each interval $(x_k,x_{k+1})$, $k=1,...,n$, see \cite{N74}. The conjecture seems to be true when examining the graphs of the Lebesgue function for the classical interpolation nodes, see Fig.1 a) and Fig.2 a). In any interval between consecutive Chebyshev nodes, $\lambda_n$ is concave in a segment close to the point where the local maximum is reached, as was proved in \cite{B78}. However, Brutman gave a counterexample in 1997 showing that $\lambda_5$ related to nodes $\{-1,-\alpha, 0, \alpha,1\}$ with $\alpha<0.45$ is convex near $\alpha$ in $(\alpha,1)$. But these nodes are far from optimal, because the Lebesgue constant for these nodes is greater than 2,66, e.g. $\Lambda_4=3.29$ for $\alpha=0.4$, whereas for the Chebyshev nodes $\Lambda_4=2$.

\vskip 2mm 
\noindent
However, numerical experiments show that, even for the classical sets of interpolation points such as Chebyshev nodes of the first and second kind, the Lebesgue function is convex on certain subintervals of $(x_1,x_2)$ and their symmetric counterparts for sufficiently large $n$. 

\begin{figure}[htbp]
\centering
    \begin{subfigure}[t]{0.48\textwidth}
        \includegraphics[width=\textwidth]{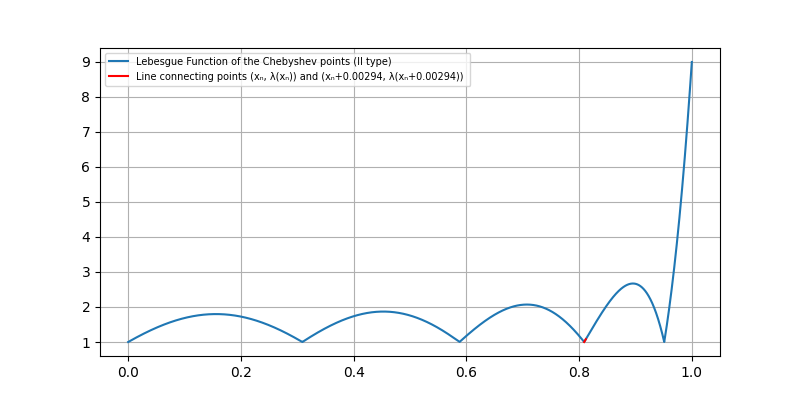}
    \caption{Lebesgue function on $[0,1]$}
    \end{subfigure}
    \begin{subfigure}[t]{0.48\textwidth}
        \includegraphics[width=\textwidth]{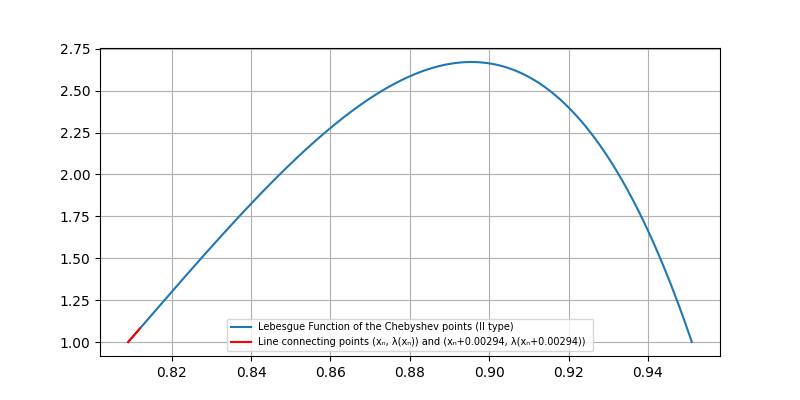}
    \caption{Close up on the interval $[x_n,x_{n+1}]$}
    \end{subfigure}
    \begin{subfigure}[t]{0.48\textwidth}
        \includegraphics[width=\textwidth]{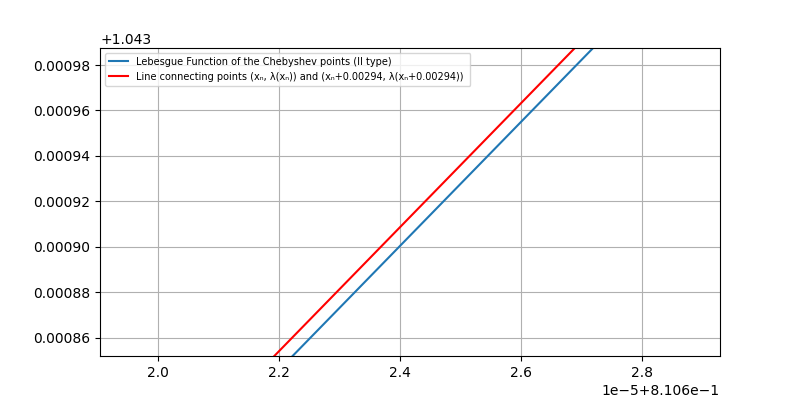}
    \caption{Difference between the plots}
    \end{subfigure}
    \caption{Convexity of the Lebesgue function of Chebyshev points of the second kind for degree 8.}
    \label{Convexity_interval_plot_ChebI}
\end{figure}

\begin{figure}[htbp]
\centering
    \begin{subfigure}[t]{0.48\textwidth}
        \includegraphics[width=\textwidth]{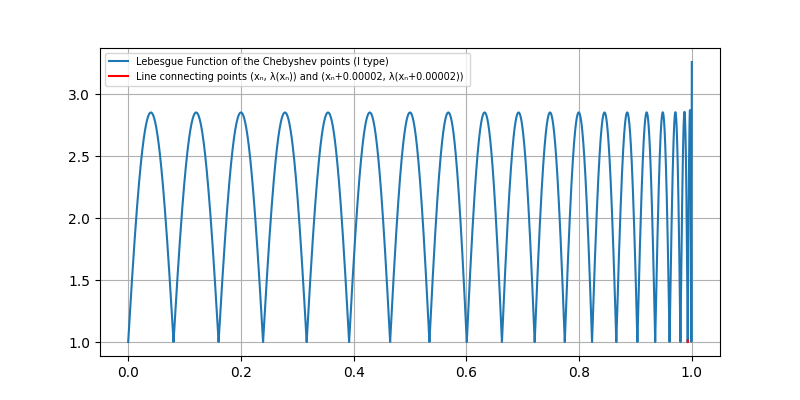}
    \caption{Lebesgue function on $[0,1]$}
    \end{subfigure}
    \begin{subfigure}[t]{0.48\textwidth}    
        \includegraphics[width=\textwidth]{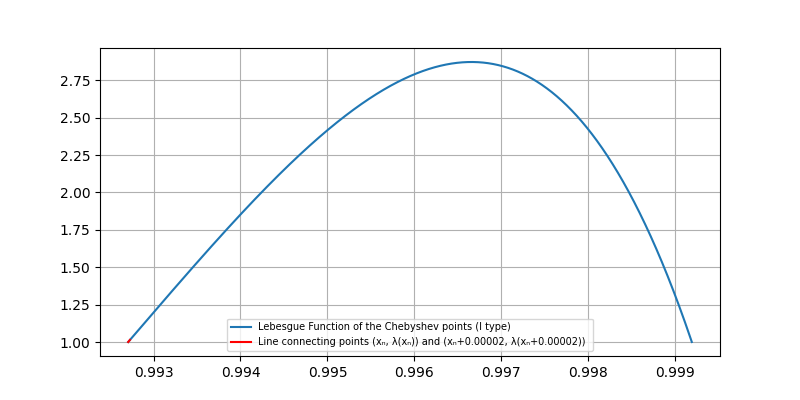}
    \caption{Close up on the interval $[x_{n-1},x_{n}]$}
    \end{subfigure}
    \begin{subfigure}[t]{0.48\textwidth}
        \includegraphics[width=\textwidth]{Numerical_convexity_II_kind_convexity.png}
    \caption{Difference between the plots}
    \end{subfigure}
    \caption{Convexity of the Lebesgue function for the Chebyshev points of the first kind for degree 38.}
\end{figure}

\noindent Numerical experiments suggest that this behaviour extends to other subintervals as well, see Table. \hyperref[Table_convexity_CheII]{1} and  Table. \hyperref[Table_convexity_CheI]{2}, where we state the lowest degree of the Lebesgue function for which every of the $m$ intervals $(x_k, x_{k+1})$, $k=1,\dots,m$ contains a convex subinterval near the right endpoint $x_{k+1}$ (left endpoint for their symmetric counterparts). More precisely, for every $k = 1,\dots,m$ there exists  $\varepsilon_{k,n}>0$ such that the restriction of the Lebesgue function to $(x_k, x_{k+1})$ is convex on $(x_{k+1} -\varepsilon_{k,n},\, x_{k+1})$, where $\varepsilon_{k,n}$ depends on both the index $k$ and the degree $n$. The provided numerical results are restricted to $m=1,\dots,20$ for the Chebyshev points of the second kind and $m=1,2,3$ for the Chebyshev points of the first kind.


\begin{table}[h]
\caption{Minimal degree $n$ ensuring local convexity near $x_{k+1}$ on $(x_k,x_{k+1})$ for every $k=1,\dots,m$ for Chebyshev points of the second kind for $m=1,\dots,20$.}
\centering
\label{Table_convexity_CheII}
\begin{tabular}{|c|c|c|c|c|}
\cline{1-2} \cline{4-5} 
Index $m$ & Min. degree $n$ & & Index $m$ & Min. degree $n$ \\ \cline{1-2} \cline{4-5} 
1  &  8   & & 11 & 181 \\ \cline{1-2} \cline{4-5}
2  & 16   & & 12 & 210 \\ \cline{1-2} \cline{4-5}
3  & 26   & & 13 & 241 \\ \cline{1-2} \cline{4-5}
4  & 38   & & 14 & 274 \\ \cline{1-2} \cline{4-5}
5  & 52   & & 15 & 309 \\ \cline{1-2} \cline{4-5}
6  & 68   & & 16 & 347 \\ \cline{1-2} \cline{4-5}
7  & 86   & & 17 & 386  \\ \cline{1-2} \cline{4-5} 
8  & 107   & & 18 & 428 \\ \cline{1-2} \cline{4-5}
9  & 129   & & 19 & 472 \\ \cline{1-2} \cline{4-5}
10  & 154   & & 20 & 518  \\ \cline{1-2} \cline{4-5} 
\end{tabular}

\end{table}

\begin{table}[h]
\caption{Minimal degree $n$ ensuring local convexity near $x_{k+1}$ on $(x_k,x_{k+1})$ for every $k=1,\dots,m$ for Chebyshev points of the first kind for $m=1,2,3$.}\label{Table_convexity_CheI}
\centering
\begin{tabular}{|c|c|} \hline
Index $m$ & Min. degree \\ \hline
1  &  38 \\ \hline
2  &  230 \\ \hline
3  &  1287 \\ \hline
\end{tabular}
\end{table}

\noindent The above convexity results were obtained by computing the finite differences of the Lebesgue function over relevant intervals with fixed degrees $n$. The computations were performed using the \textit{Chebfun} package for MATLAB \cite{chebfun}, and \textit{mpmath} Python library for floating point arithmetic with arbitrary precision \cite{mpmath}, to ensure the required numerical accuracy.


\section{Two-dimensional case}

\subsection{Padua points}
One of the most extensively studied sets of interpolation nodes in the bivariate setting are the Padua points on the square, introduced in \cite{marchi5}. They are the only known set of multivariate interpolation points for which the Lebesgue constant achieves the optimal lower bound for the rate of increase, namely $O(log^2(n))$. 
\vspace{2mm}

Let us denote Padua points as \vspace{-2mm}
\begin{equation}
    Pad_n := \{(\mu_j, \eta_k), \; j = 1,\dots, n ; \; k = 1,\dots, \left\lfloor \frac{n}{2} \right\rfloor + 1 + \delta_j \}
\end{equation}
where 
\begin{equation}
\mu_j = \cos \left( \frac{j\pi}{n} \right), \ \ \eta_k = 
\begin{cases}
\cos\left( \frac{(2k-2)\pi}{n+1}\right)  &j \text{ odd}\\
\cos\left( \frac{(2k-1)\pi}{n+1}\right) &j \text{ even}
\end{cases}
\end{equation}
\vskip 1mm
\noindent and \(\delta_j = 0\) if \(j\) is odd and \(\delta_j = 1\) if $n$ and $k$ are both odd, {see Fig. \ref{Multiple_maximums_plots_Padua_points}. \vspace{3mm}

\noindent The set of Padua points can also be introduced in several other ways, for instance through the self-intersections of certain Lissajous curves, as discussed in \cite{PD1}.

\subsection{Morrow-Patterson points}

Another set of interpolation nodes of interest on the square is the so-called Morrow-Patterson points. \vspace{-2mm}
\begin{equation*} 
    MP_n := \{(\mu_j, \eta_k), \; j = 1,\dots, n ; \; k = 1,\dots, \left\lfloor \frac{n}{2} \right\rfloor + 1 + \delta_j \}
\end{equation*}
where 
\begin{equation*}
\mu_j = \cos \left( \frac{j\pi}{n+2} \right), \ \ \eta_k = 
\begin{cases}
\cos\left( \frac{2k\pi}{n+3}\right)  &j \text{ odd}\\
\cos\left( \frac{(2k-1)\pi}{n+3}\right) &j \text{ even}
\end{cases}
\end{equation*}
\vskip 1mm
\noindent and \(\delta_j = 0\) if \(j\) is odd and \(\delta_j = 1\) if $n$ and $k$ are both odd, see Fig. \ref{Multiple_maximums_plots_MP_points}. We can also define the Morrow-Patterson points via the Padua points by the relationship \vspace{-1mm}
\begin{equation*}
    MP_{n} = Pad_{n+2} - \text{edge points.} \vspace{-1mm}
\end{equation*}
The Morrow-Patterson points are sub-optimal compared to Padua points in the sense that the Lebesgue constants of $MP_n$ points grow asymptotically like $O(n^2)$ \cite{MP1}, \cite{MP2}.

\subsection{Local maxima of the Lebesgue function on the square}

As mentioned in Subsection 2.1, in the case of the interval, by considering the zeros of the derivative of the Lebesgue function restricted to $(x_k, x_{k+1})$, one can show that $\lambda_n$ attains its maximum on $[x_k, x_{k+1}]$ at exactly one point. Consequently, the Lebesgue function of $n+1$ nodes has at most $n$ local maxima on the interval $[x_0,x_n]$.
\vskip 2mm
\noindent We are interested in whether a similar estimate for the number of local maxima can be extended to the Lebesgue function on the square. 
\vskip 2mm
\noindent Our numerical experiments suggest that for the Padua points and the MP points the number of local maxima in the interior of the square is bounded from below by $\frac{n(n-1)}{2}$ and the number of local maxima on the whole square is bounded from below \linebreak by $\frac{(n+1)(n+2)}{2}$. However, equality does not hold in general, see Table. \hyperref[number_of_loc_max_Pad]{3} and Table. \hyperref[number_of_loc_max_MP]{4}.

\vskip 5mm

\Needspace{10\baselineskip}
\noindent
\begin{minipage}{0.45\textwidth}
\textbf{Table 3.} Number of local maxima of Lebesgue function for Padua points.
\end{minipage}
\hspace{0.075\textwidth}
\begin{minipage}{0.45\textwidth}
\textbf{Table 4.} Number of local maxima of Lebesgue function for MP points.
\end{minipage}
\vspace{-4mm}
\begin{table}[h]
\begin{minipage}[c]{0.5\textwidth}
\begin{tabular}{|c|c|c|}
\hline
deg. $n$ & \shortstack{number of inner\\local maxima} & \shortstack{number of \\ local maxima} \\
\hline
3 & 7 & 15 \\ \hline
4 & 13 & 27 \\ \hline
5 & 14 & 25 \\ \hline
6 & 23 & 39 \\ \hline
7 & 27 & 42 \\ \hline
8 & 38 & 56 \\ \hline
\end{tabular}
\label{number_of_loc_max_Pad}
\end{minipage}
\hspace{1cm}
\begin{minipage}[c]{0.5\textwidth}
\begin{tabular}{|c|c|c|}
\hline
deg. $n$ & \shortstack{number of inner\\local maxima} & \shortstack{number of\\ local maxima} \\
\hline
3 & 9 & 18 \\ \hline
4 & 8 & 17 \\ \hline
5 & 27 & 38 \\ \hline
6 & 19 & 32 \\ \hline
7 & 27 & 42 \\ \hline
8 & 36 & 53 \\ \hline
\end{tabular}
\label{number_of_loc_max_MP}
\end{minipage}
\end{table}

\vskip 4mm 

\begin{remark}
    {\rm Let $\mathcal{A}$ be a set of interpolation nodes on a square. In every region separated by the curve $\{\ell_{A^{0}}^{\mathcal{A}}(x,y)=0\}$ for some $A^{0}\in \mathcal{A}$, the fundamental polynomial $\ell_{A^0}^{\mathcal{A}}(x,y)$ has a constant sign, see e.g. Fig. \ref{Fundamental_polynomial_padua}.
    
    \vspace{1mm}
    \noindent Notice that if the sum 
    \[ 
    \sum_{A \in \mathcal{A}, A\neq A^0}|\ell^{\mathcal{A}}_A(x,y)| 
    \]
    grows more slowly than $|\ell_{A^0}^{\mathcal{A}}(x,y)|$ in some neighborhood of the curve $\{\ell_{A^{0}}^{\mathcal{A}}(x,y)=0\}$, then their sum, i.e., $\lambda_n^{\mathcal{A}}(x,y)$, will increase on both sides of the curve. 
    
    \vspace{1mm}
    \noindent Numerical experiments show that for both Morrow-Patterson and Padua points, if a local maximum of 
    \[ 
    \sum_{A\neq A^0}|\ell_{A}(x,y)| 
    \]
    lies within a sufficiently close neighborhood of the curve $\{\ell_{A^{0}}(x,y)=0\}$, this may result in additional local maxima of the Lebesgue function, depending on the respective rates of change, see Fig. \ref{Multiple_maximums_plots_Padua_points} and Fig. \ref{Multiple_maximums_plots_MP_points}.
    }
\end{remark}
    
\begin{figure}[htbp]
     \begin{subfigure}[t]{0.5\textwidth}
        \includegraphics[width=\textwidth]{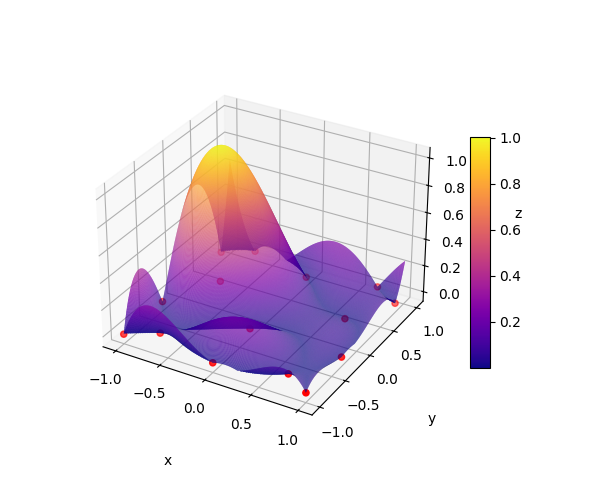}
        \caption{The absolute value of fundamental Lagrange polynomial $|\ell_A^{Pad}(x,y)|$ for the Padua points of degree $4$ corresponding to the node $A=(\cos\frac{2\pi}{5},\cos \frac{3\pi}{4})$. The red points denote the interpolation nodes.}
    \end{subfigure}
    \hspace{.5cm}
    \begin{subfigure}[t]{0.42\textwidth}
        \includegraphics[width=\textwidth]{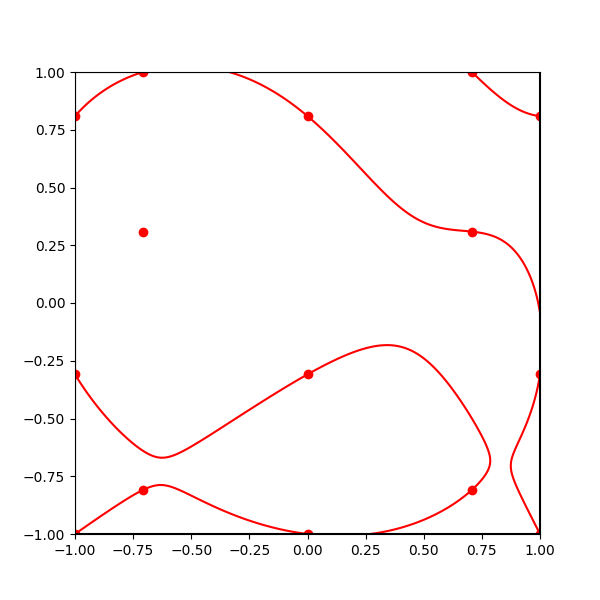}
        \caption{Plot of the curve $\{\ell_{A}^{Pad}(x,y)=0\}$.}
    \end{subfigure}
    \caption{The fundamental Lagrange polynomial corresponding to one of the Padua points.}
    \label{Fundamental_polynomial_padua}
\end{figure}

\begin{figure}[htbp]
\centering
     \begin{subfigure}[t]{0.41\textwidth}
        \includegraphics[width=\textwidth]{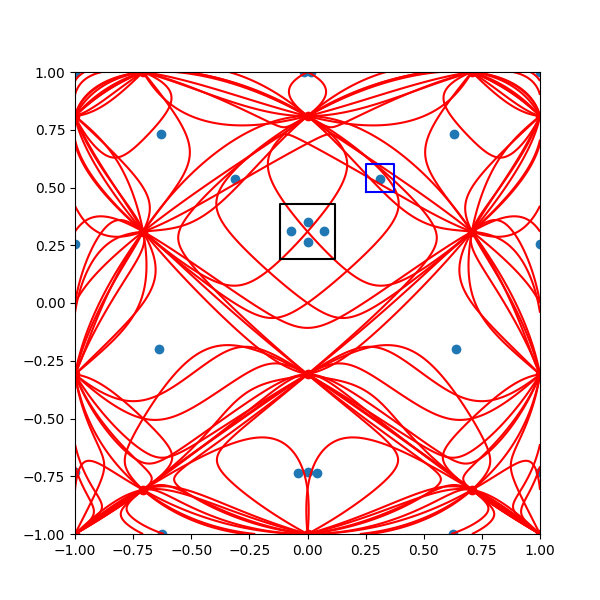}
        \caption{Curves $\ell_{A}^{Pad}(x,y)=0$ for all $A \in Pad_4$. The blue points indicate local maxima of the function $\lambda_n^{Pad}(x,y)$}
    \end{subfigure}
    \hspace{0.08\textwidth}
    \begin{subfigure}[t]{0.49\textwidth}
        \includegraphics[width=\textwidth]{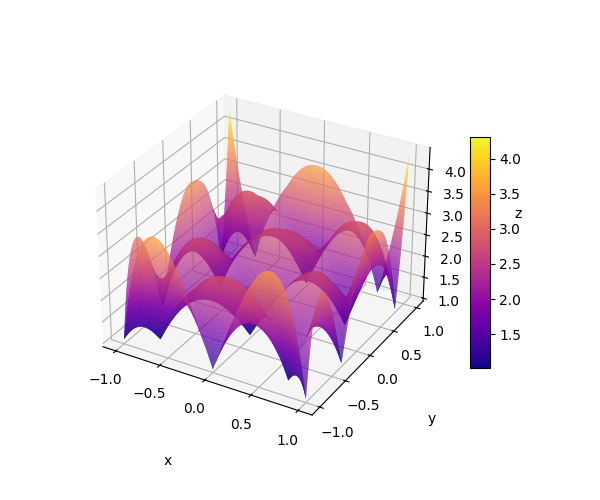}
        \caption{Lebesgue function of the Padua points of degree $4$.}
    \end{subfigure}
    %
    %
    \vspace{-3mm}
    
     \begin{subfigure}[t]{0.36\textwidth}
        \includegraphics[width=\textwidth]{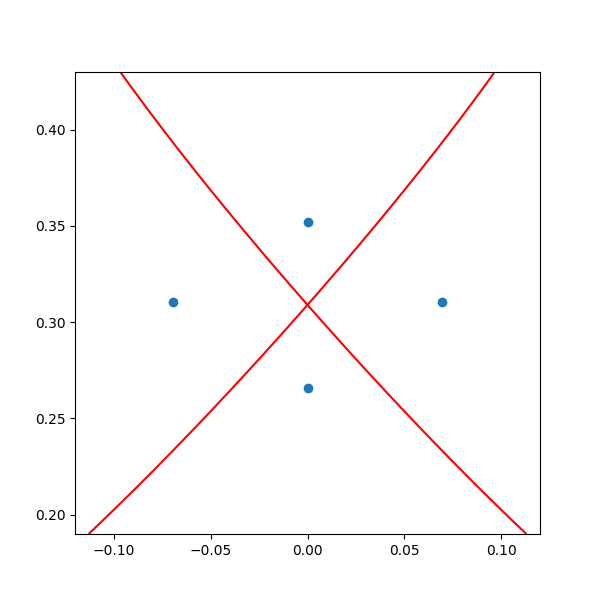}
        \caption{Close up of the region outlined by the black square in (a).}
    \end{subfigure}
    \hspace{5mm}
    \begin{subfigure}[t]{0.47\textwidth}
        \includegraphics[width=\textwidth]{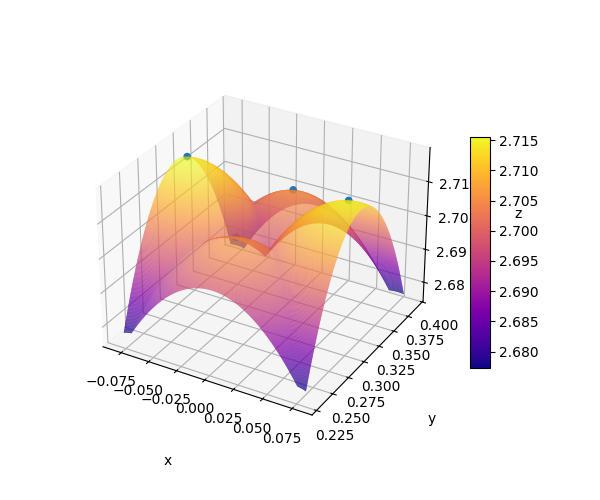}
        \caption{The surface of the Lebesgue function over the region in (c).}
    \end{subfigure} 
    %
    %
    \vspace{-3mm}
    
    \begin{subfigure}[t]{0.36\textwidth}
        \includegraphics[width=\textwidth]{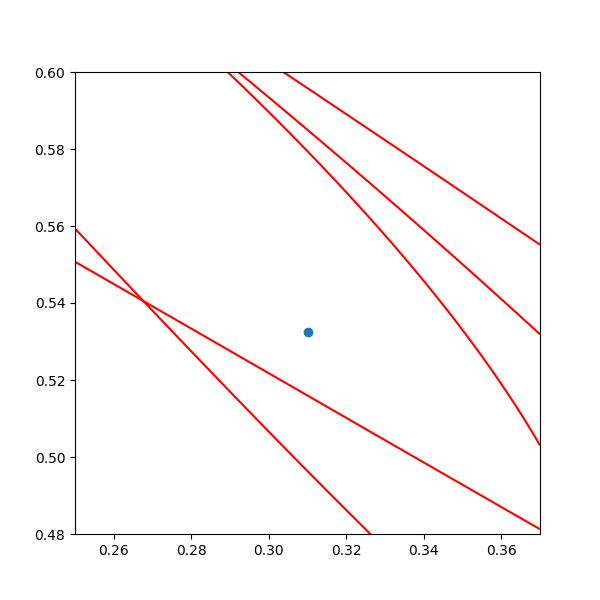}
        \caption{Close up of the region outlined by the dark blue square in (a).}
    \end{subfigure}
    \hspace{5mm}
    \begin{subfigure}[t]{0.47\textwidth}
        \includegraphics[width=\textwidth]{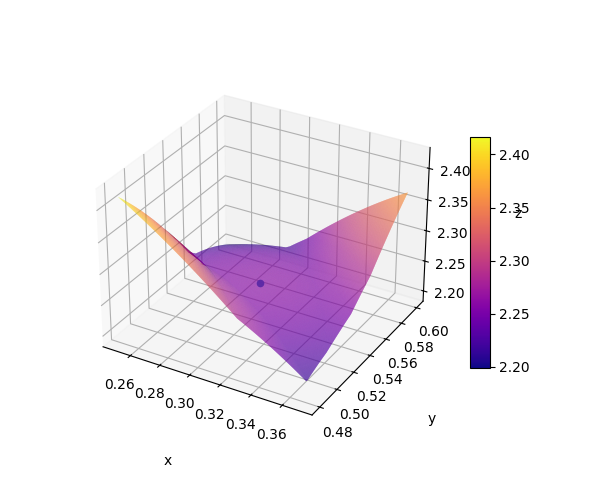}
        \caption{The surface of the Lebesgue function over the region in (e).}
    \end{subfigure} 
    \caption{Lebesgue function and its local maxima for the Padua points of degree $4$.}
    \label{Multiple_maximums_plots_Padua_points}
\end{figure} 

\begin{figure}[htbp]
\centering
     \begin{subfigure}[t]{0.41\textwidth}
        \includegraphics[width=\textwidth]{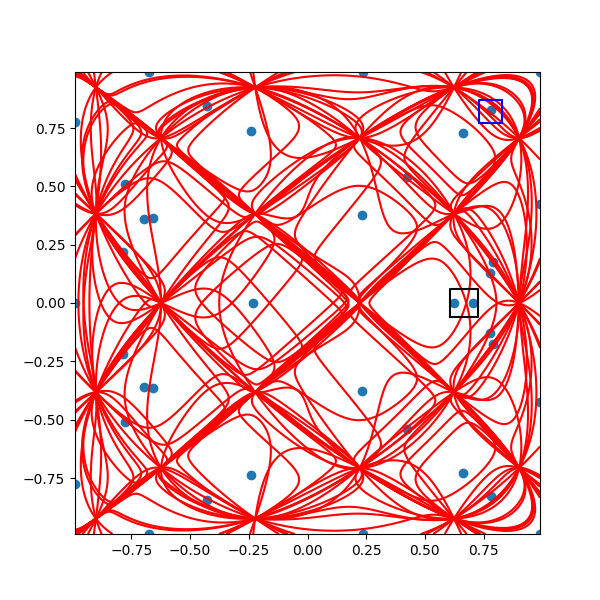}
        \caption{Curves $\ell_A^{MP}(x,y)=0$ for all $A \in MP_5$. The blue points indicate local maxima of the function $\lambda_n^{MP}(x,y)$}
    \end{subfigure}
    \hspace{0.08\textwidth}
    \begin{subfigure}[t]{0.49\textwidth}
        \includegraphics[width=\textwidth]{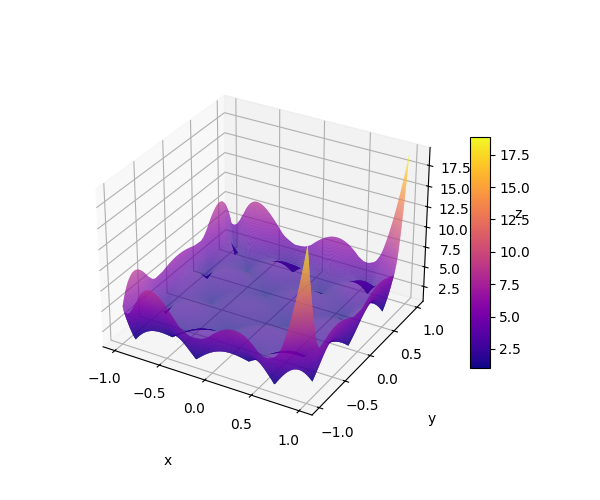}
        \caption{Lebesgue function of the MP points of degree $5$.}
    \end{subfigure}
    %
    %
    \vspace{-3mm}    
    
    \begin{subfigure}[t]{0.36\textwidth}
        \includegraphics[width=\textwidth]{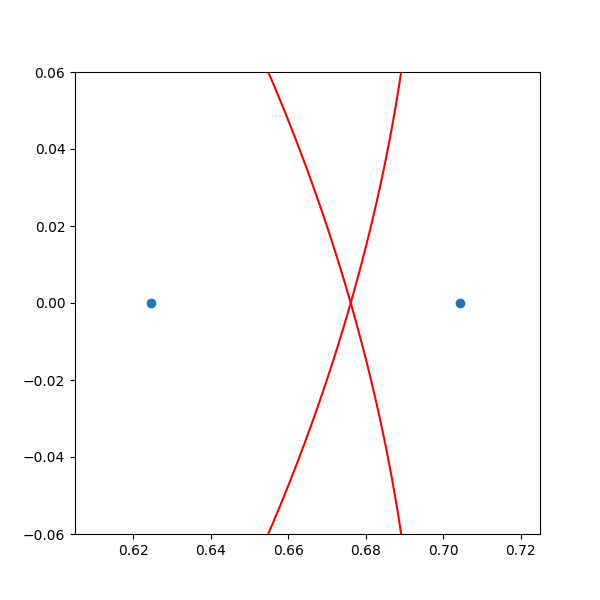}
        \caption{Close up of the region outlined by the black square in (a).}
    \end{subfigure}
    \hspace{5mm}
    \begin{subfigure}[t]{0.47\textwidth}
        \includegraphics[width=\textwidth]{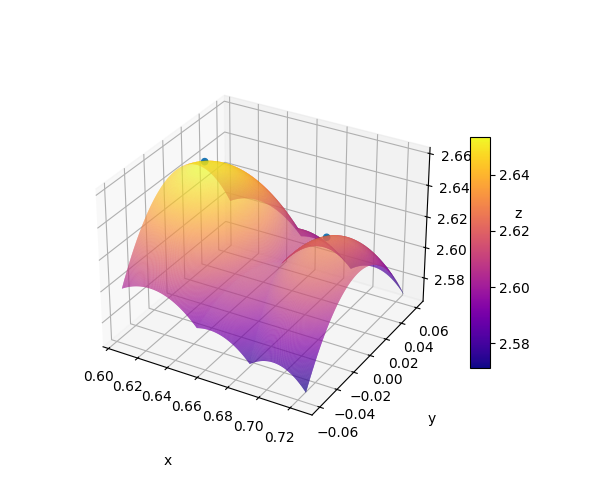}
        \caption{The surface of the Lebesgue function over the region in (c).}
    \end{subfigure} 
    %
    %
    \vspace{-3mm}
    
    \begin{subfigure}[t]{0.35\textwidth}
        \includegraphics[width=\textwidth]{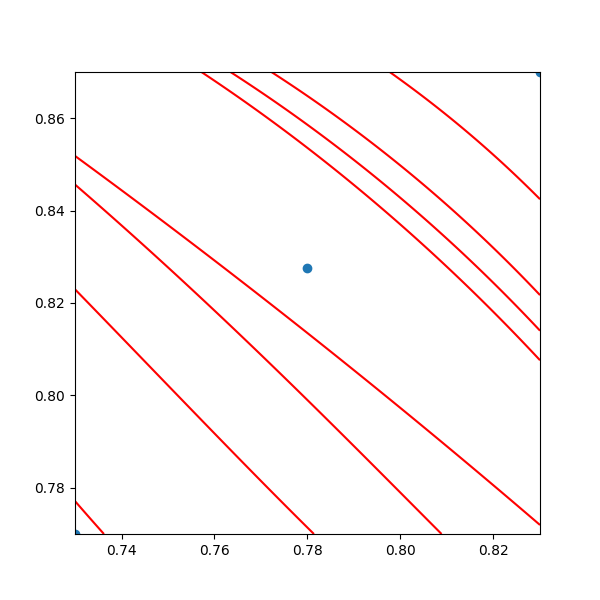}
        \caption{Close up of the region outlined by the dark blue square in (a).}
    \end{subfigure}
    \hspace{5mm}
    \begin{subfigure}[t]{0.47\textwidth}
        \includegraphics[width=\textwidth]{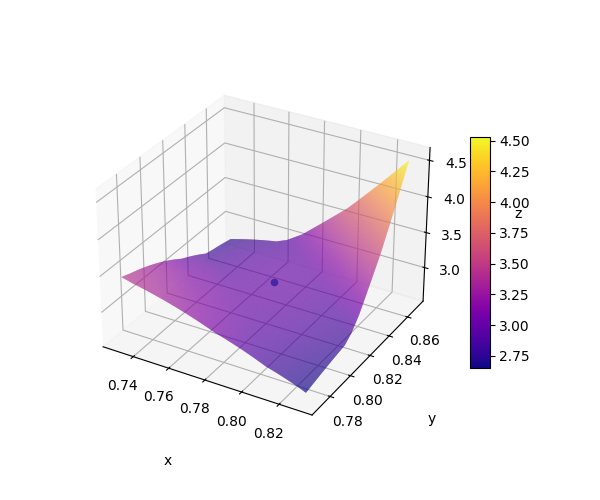}
        \caption{The surface of the Lebesgue function over the region in (e).}
    \end{subfigure}
    \caption{Lebesgue function and its local maxima for the MP points of degree $5$.}
    \label{Multiple_maximums_plots_MP_points}
\end{figure}

\section{Concluding remarks and open problems}

\noindent As illustrated in Fig. \ref{Cheb_small_perturbation}, the location of the global maximum of the Lebesgue function in the interval can change even with minute changes of the interpolation nodes. We suspect that a condition on the node distribution over the entire interval, based on the covering radius with respect to an arccos-type metric, may provide further insight into the overall geometry of the Lebesgue function. 

\vskip 2mm
\noindent
Proposition and Theorem proved in Subsection 2.1  provide a necessary condition for the Lebesgue function associated with a general family of nodes to attain its maximum at the point~1. A natural question is which sufficient conditions guarantee this property.

\vskip 2mm
\noindent
The degrees of the Lebesgue function for which it is convex on certain subintervals, discussed in Subsection 2.2, appear to become increasingly sparse for point sets in which the maximal difference between local maxima decreases. In particular, as point sets approach optimal interpolation nodes, the regions of convexity seem to diminish in size and occur less frequently. It remains an open question whether the Lebesgue function of optimal interpolation points on an interval contains any regions of convexity between interpolation points. Verifying such a property would be an interesting problem for the characterization of optimal node sets. However, due to the high computational complexity of approximating those sets with high accuracy at large degrees, verifying this property numerically might prove challenging.

\vskip 2mm 
\noindent
In the two-dimensional case, the number of local maxima for both the Padua and MP points appears to be the estimated lower bound increased by the number of additional maxima discussed in Remark 2 and illustrated in Fig. \ref{Multiple_maximums_plots_Padua_points} and Fig. \ref{Multiple_maximums_plots_MP_points}. Estimating the number of these additional maxima, as well as explicitly characterizing the conditions under which multiple nearby local maxima occur, would provide valuable insight into the general geometry of the Lebesgue function in the bivariate case.

\vskip 4mm

\bmhead{Acknowledgements}
The collaboration of the authors in Padua, as well as the research stay of the third author in Kraków, were supported by the program Excellence Initiative at the Jagiellonian University in Krakow (ID.UJ).
The second author thanks the INdAM-GNCS group, the thematic group on "Approximation Theory and Applications" of the Italian Mathematical Union and the Selçuk University, Department of Mathematics.

\vskip 8mm

\section*{Declarations}

\begin{itemize}
\item Funding: not applicable
\item Conflict of interest/Competing interests (check journal-specific guidelines for which heading to use): not applicable
\item Ethics approval and consent to participate: not applicable
\item Consent for publication: yes
\item Data availability: not applicable
\item Materials availability: not applicable
\item Code availability: not applicable
\item Author contribution: equally contributed
\end{itemize}

\noindent

\end{document}